\def\tri{\mathcal{T}}
\def\bkR{{\rm I\kern-.17em R}}
\def\R{\bkR}
\def\bkZ{{\rm Z\kern-.26em Z}}
\def\Z{\bkZ}
\newcommand{\abs}[1]{\lvert#1\rvert}
\theoremstyle{plain}
\newtheorem{theorem}{Theorem}
\newtheorem*{theorem*}{Theorem}
\newtheorem{lemma}[theorem]{Lemma}
\newtheorem{proposition}[theorem]{Proposition}
\newtheorem{corollary}[theorem]{Corollary}
\theoremstyle{definition}
\newtheorem*{definition*}{Definition}
\theoremstyle{remark}
\numberwithin{equation}{section}
\begin{document}

\title{Coverings and Minimal Triangulations of 3--Manifolds}
\author{William Jaco, Hyam Rubinstein and Stephan Tillmann}

\begin{abstract}
This paper uses results on the classification of minimal triangulations of 3-manifolds to produce additional results, using covering spaces. Using previous work on minimal triangulations of lens spaces, it is shown that the lens space $L(4k, 2k-1)$ and the generalised quaternionic space $S^3/Q_{4k}$ have complexity $k,$ where $k\ge 2.$ Moreover, it is shown that their minimal triangulations are unique.
\end{abstract}

\primaryclass{57M25, 57N10}
\keywords{3--manifold, minimal triangulation, layered triangulation, efficient triangulation, complexity, prism manifold, small Seifert fibred space}
\makeshorttitle


\section{Introduction}

Given a closed, irreducible 3--manifold, its complexity is the minimum number of tetrahedra in a (pseudo--simplicial) triangulation of the manifold. This number agrees with the complexity defined by Matveev~\cite{Mat1990} unless the manifold is $S^3,$ $\R P^3$ or $L(3,1).$ The complexity for an infinite family of closed manifolds has first been given by the authors in \cite{JRT}. The family consisted of lens spaces having a non-trivial $\Z_2$--cohomology class and satisfying an additional, combinatorial constraint.

The main idea in the present paper is the following. Suppose $M$ is a 3--manifold having a connected double cover, $\widetilde{M}.$ A one-vertex triangulation, $\tri,$ of $M$ lifts to a 2--vertex triangulation, $\widetilde{\tri},$ of $\widetilde{M}.$ Because there are two vertices, the lifted triangulation will, in general, not be minimal. One may choose an edge, $\tilde{e},$ joining the two vertices. If certain hypotheses apply, $\tilde{e}$ and the tetrahedra incident with it can be \emph{crushed} to form a new one-vertex triangulation $\widetilde{\tri}^*$ of $\widetilde{M}.$ If $t(\tilde{e})$ denotes the number of tetrahedra incident with $\tilde{e},$ then $c(\widetilde{M}) \le 2 \abs{\tri} -t(\tilde{e}).$ If the complexity of $\widetilde{M}$ is known, this line of argument can be used to show that a given triangulation of $M$ must be minimal. The weakest general bound resulting from this approach is stated below: 

\begin{proposition}\label{pro:crushing-intro}
Let $M$ be a closed, orientable, connected, irreducible 3--manifold, and suppose $\widetilde{M}$ is a connected double cover of $M.$ If $c(M)\ge 2,$ then $c(\widetilde{M}) \le 2\ c(M) -3.$ 
\end{proposition}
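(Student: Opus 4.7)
The plan is to realise the crushing strategy outlined above using a minimal one-vertex triangulation of $M,$ and to locate a single edge in the lifted triangulation whose crushing already achieves the claimed inequality. Take a minimal triangulation $\tri$ of $M,$ so $\abs{\tri}=c(M).$ Under the stated hypotheses on $M$ (closed, orientable, irreducible, with $c(M)\ge 2$), previous work on minimal and $0$--efficient triangulations of closed irreducible $3$--manifolds lets us assume that $\tri$ has exactly one vertex $v$ and that every edge of $\tri$ has degree at least $3$; this last fact is what will deliver the constant $3$ in the inequality.

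Lift $\tri$ along $p\colon\widetilde{M}\to M$ to obtain a two-vertex triangulation $\widetilde{\tri}$ of $\widetilde{M}$ with $2\,c(M)$ tetrahedra and two vertices $v_1,v_2$ lying over $v.$ Every edge of $\tri$ is a loop at $v,$ and its preimage consists of two $1$--simplices which are either both loops (one based at each $v_i$) or both arcs from $v_1$ to $v_2,$ according as the edge represents the trivial or the non-trivial element of the deck group $\pi_1(M)/p_*\pi_1(\widetilde{M})\cong\Z_2.$ Since $\widetilde{M}$ is connected, some edge $e$ of $\tri$ must lift to arcs; let $\tilde{e}$ be one such lift. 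For each tetrahedron of $\tri$ incident with $e,$ exactly one of its two preimages in $\widetilde{\tri}$ is incident with $\tilde{e},$ so $t(\tilde{e})=t(e)\ge 3.$

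Crushing $\tilde{e}$ together with the $t(\tilde{e})$ tetrahedra incident with it produces a one-vertex triangulation $\widetilde{\tri}^*$ of $\widetilde{M}$ with
\[
\abs{\widetilde{\tri}^*} \;=\; 2\,c(M)-t(\tilde{e}) \;\le\; 2\,c(M)-3,
\]
and the proposition follows. The main difficulty is making the crushing step rigorous: one must identify combinatorial conditions on the link of $\tilde{e}$ and on the tetrahedra around it --- for instance, ruling out problematic face identifications between tetrahedra that meet along $\tilde{e}$ --- under which the collapse actually yields a valid pseudo-simplicial triangulation of $\widetilde{M}.$ One expects the bulk of the preparatory work to go into either showing that such conditions can always be arranged by a judicious choice of odd edge $e,$ or into directly treating the special configurations where no good odd edge exists and $\widetilde{M}$ nevertheless admits a sufficiently small triangulation by a more \emph{ad hoc} construction.
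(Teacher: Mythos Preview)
Your overall strategy matches the paper's, but the step where you conclude $t(\tilde{e})\ge 3$ contains a real gap, and you have the relative difficulty of the two remaining tasks reversed.

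First, the assertion that for each tetrahedron of $\tri$ incident with $e$ \emph{exactly one} of its two lifts meets $\tilde{e}$ is false: if $e$ occurs as more than one edge of a single tetrahedron $\sigma$ (which a pseudo-triangulation allows), then both lifts of $\sigma$ may meet $\tilde{e}$; the paper's Figure~\ref{fig:edges} catalogues precisely these configurations. The inequality $t(\tilde{e})\ge t(e)$ does survive, so this slip alone is harmless for the bound you want.

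Second, and this is the genuine gap, you conflate the \emph{degree} $d(e)$ with the number $t(e)$ of distinct tetrahedra containing $e$. One always has $t(e)\le d(e)$, so knowing that every edge in a minimal triangulation has $d(e)\ge 3$ gives no lower bound on $t(e)$; a priori an odd edge could have $d(e)=4$ yet lie in only two tetrahedra, or $d(e)=5$ and lie in three tetrahedra whose lifts nonetheless give $t(\tilde{e})>3$ only by accident. Establishing $t(\tilde{e})\ge 3$ is exactly the content of Lemma~\ref{lem:edges}: it requires a case analysis over the standard low-degree edge models $X^i_{d;t}$ from \cite{JRT}, filtered by the parity constraint $\varphi[e]=1$ and by the absence of cone and dunce-hat faces, and it is only carried out for $c(M)\ge 4$. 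The cases $c(M)\in\{2,3\}$ are handled separately by inspection of the census, not by the general argument; your proof sketch does not account for this. By contrast, the crushing step you flagged as the main difficulty turns out to be short: because $\tilde{e}$ has distinct endpoints $v,v'$, the other two edges of any face through $\tilde{e}$ have endpoint-types $\{v,v\}$ and $\{v,v'\}$ (or $\{v',v'\}$ and $\{v,v'\}$), which already forbids the problematic identifications (Proposition~\ref{pro:crushing}).
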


This paper determines the minimal triangulations of the manifolds for which equality holds. This is based on previous work \cite{JRT}, where it was shown that $L(2k,1)$ has complexity $2k-3.$ The lens space $L(2k,1)$ double covers both the lens space $L(4k, 2k-1)$ and the generalised quaternionic space $S^3/Q_{4k},$ where $k \ge 2.$

\begin{proposition}\label{pro:crushing-eq}
Let $M$ be a closed, orientable, connected, irreducible 3--manifold, and suppose $\widetilde{M}$ is a connected double cover of $M.$ If $c(\widetilde{M}) = 2\ c(M) -3,$ then either 
\begin{enumerate}
\item $\widetilde{M}= S^3$ and $M= \R P^3,$ or
\item $\widetilde{M}= L(2k,1)$ for some $k\ge 2$ and $M$ has a unique minimal triangulation and is the lens space $L(4k, 2k-1)$ or the generalised quaternionic space $S^3/Q_{4k}.$ 
\end{enumerate}
\end{proposition}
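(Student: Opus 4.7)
\medskip
\noindent\textit{Proof sketch.}
The plan is to start from a minimal one-vertex triangulation $\tri$ of $M$ with $n = c(M) \ge 2$ tetrahedra, apply the crushing construction from the introduction, and extract tight information from the equality hypothesis. Let $\widetilde{\tri}$ be the lifted two-vertex triangulation of $\widetilde{M}$. Since the cover is non-trivial, the associated homomorphism $\pi_1(M)\to \Z_2$ is non-zero, so at least one edge $e$ of $\tri$ has non-trivial monodromy and its preimage $\tilde{e}$ is a single edge of $\widetilde{\tri}$ joining the two vertices. Applying the bound $c(\widetilde{M})\le 2n - t(\tilde{e})$ to such a $\tilde{e}$ of minimum degree, combined with the elementary lower bound $t(\tilde{e})\ge 3$ and the equality hypothesis $c(\widetilde{M}) = 2n-3$, forces $t(\tilde{e})=3$ and makes the crushed triangulation $\widetilde{\tri}^*$ a \emph{minimal} one-vertex triangulation of $\widetilde{M}$.

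I would then split on $n$. For $n = 2$ one has $c(\widetilde{M})=1$, and the short list of closed orientable $3$-manifolds with a one-tetrahedron triangulation, together with the constraint that $\widetilde{M}$ carry a free orientation-preserving $\Z_2$ action, forces $\widetilde{M}=S^3$ and $M=\R P^3$, giving conclusion~(1). For $n\ge 3$ one has $c(\widetilde{M})\ge 3$, and I would invoke the main theorem of \cite{JRT}: $c(L(2k,1))=2k-3$ with a unique layered minimal triangulation for $k\ge 2$. The combinatorial shape of $\widetilde{\tri}^*$ is very restricted, arising from crushing a degree-$3$ edge out of a lifted two-vertex triangulation, and I would argue that this shape matches the JRT triangulation of $L(2k,1)$ with $k=n$, and matches no other closed $3$-manifold at complexity $2k-3$. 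This identifies $\widetilde{M}=L(2k,1)$.

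With $\widetilde{M}=L(2k,1)$ fixed, $M$ is the quotient by a free, orientation-preserving involution $\iota$. Then $\pi_1(M)$ fits in an extension $1\to \Z_{2k}\to \pi_1(M)\to \Z_2\to 1$, and a standard enumeration of such extensions admitting a free orientation-preserving action on $S^3$ yields exactly two possibilities: $\pi_1(M)=\Z_{4k}$, giving $M=L(4k,2k-1)$, and $\pi_1(M)=Q_{4k}$, giving $M=S^3/Q_{4k}$. To obtain uniqueness of the minimal triangulation of $M$, I would use the uniqueness of $\widetilde{\tri}^*$: the involution $\iota$ must act as a combinatorial automorphism of the uncrushed $\widetilde{\tri}$, must swap the two vertices, and must permute the three tetrahedra around $\tilde{e}$ in a prescribed way. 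In each of the two cases this rigidity pins down a unique one-vertex triangulation $\tri$ of $M$.

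The main obstacle is the identification step for $\widetilde{M}$: showing that crushing a degree-$3$ edge from a lifted two-vertex triangulation forces a combinatorial shape that is only realised inside $L(2k,1)$. This amounts to tracking the layered-solid-torus invariants through the crush and to ruling out other candidate $3$-manifolds (for example small Seifert fibred spaces) that might share the complexity value $2k-3$. The necessary ingredients are supplied by \cite{JRT}, but combining them with the covering-space analysis above is where the real work lies.
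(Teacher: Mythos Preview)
Your approach diverges from the paper's at the crucial identification step, and the gap you flag as ``where the real work lies'' is genuine and not closed along the lines you suggest. The paper does \emph{not} try to recognise the crushed triangulation $\widetilde{\tri}^*$ combinatorially, and it explicitly avoids using the uniqueness of the minimal triangulation of $L(2k,1)$. Instead, the equality hypothesis is applied to \emph{every} edge of $\widetilde{\tri}$ joining the two vertices (not just one of minimum degree), forcing each such edge to lie in exactly three tetrahedra. Lemma~\ref{lem:edges} then pins down the local model around the image of each such edge as $X^1_{4;3}$, and a propagation argument shows that $\widetilde{\tri}$ has exactly one edge-loop at each vertex. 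The quadrilateral discs separating these two loops assemble into a normal torus in $\widetilde{M}$ (so $\widetilde{M}$ is a lens space), which descends to an incompressible Klein bottle in $M$. Thus $\tri$ is dual to a one-sided Heegaard diagram, and the classification in \cite{Rubin1979} identifies $M$ as a Dehn filling of the twisted $I$--bundle over the Klein bottle along a curve $b^{2m}a^n$; uniqueness of $\tri$ follows because the minimising curve is unique up to isotopy. Only then is $\widetilde{M}$ computed as $L(2mn,x)$, and the constraint $c(L(2mn,x))=2mn-3$ forces $x=1$ (via the upper bound from the minimal layered triangulation), hence $m=1$ or $n=1$.

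There are also two concrete errors in your outline. First, the case $n=2$ does not force conclusion~(1): the pair $\widetilde{M}=L(4,1)$, $M=L(8,3)$ (and likewise $M=S^3/Q_8$) satisfies $c(\widetilde{M})=1=2\cdot 2-3$, so conclusion~(2) with $k=2$ also arises here; your claim that a free $\Z_2$ action on a complexity-one manifold forces $\widetilde{M}=S^3$ is false. Second, the extension $1\to\Z_{2k}\to\pi_1(M)\to\Z_2\to 1$ does not by itself yield only two quotients of $L(2k,1)$: for instance $L(4k,1)$ also double covers $L(2k,1)$, and must be excluded separately (in the paper this falls out of the Heegaard-diagram parameters together with the complexity constraint).
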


It should be noted that the proof does not use the fact that the minimal triangulation of $L(2k,1)$ is unique; the uniqueness part follows from the fact that these triangulations are shown to be dual to one-sided Heegaard splittings. We now describe the unique minimal triangulations in an alternative way.

Recall from \cite{JR:LT} that each lens space has a unique \emph{minimal layered triangulation} and that this is conjectured to be its unique minimal triangulation. The minimal layered triangulation of the lens space $L(4k, 2k-1)$ has $k$ tetrahedra. (The main result in \cite{JRT} does not include these lens spaces.) 

Following Burton \cite{bab}, a \emph{layered chain} of length $k,$ denoted $C_k,$ is defined to be a certain triangulation of the solid torus with four boundary faces and $k$ tetrahedra. A suitable identification of the boundary faces of $C_k$ results in the \emph{twisted layered loop triangulation} $\widehat{C}_k$ of $S^3/Q_{4k}.$ 

\begin{corollary}\label{cor:minimal triangulations}
For every $k \ge 2,$ $L(4k, 2k-1)$ and $S^3/Q_{4k}$ have complexity $k.$ The unique minimal triangulation of $L(4k, 2k-1)$ is its minimal layered triangulation and the unique minimal triangulation of $S^3/Q_{4k}$ is its twisted layered loop triangulation.
\end{corollary}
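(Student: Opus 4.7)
The plan is to sandwich $c(M)$ between an upper bound from the explicit triangulations just described and a lower bound from Proposition~\ref{pro:crushing-intro}, applied to the connected double cover $L(2k,1)\to M.$ First I would record the upper bound: the minimal layered triangulation of $L(4k,2k-1)$ and the twisted layered loop triangulation $\widehat{C}_k$ of $S^3/Q_{4k}$ each consist of $k$ tetrahedra, so $c(L(4k,2k-1))\le k$ and $c(S^3/Q_{4k})\le k.$

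Next I would establish the matching lower bound. Writing $M$ for either manifold, I would consult the classification of closed, orientable, irreducible 3--manifolds of complexity at most $1$ (namely $S^3,$ $\R P^3,$ $L(3,1),$ $L(4,1),$ and $L(5,2)$) to conclude that $c(M)\ge 2,$ since neither $L(4k,2k-1)$ nor $S^3/Q_{4k}$ with $k\ge 2$ appears in that list. Proposition~\ref{pro:crushing-intro} then applies to the cover $L(2k,1)\to M$ and yields
\[
2k-3 \;=\; c(L(2k,1)) \;\le\; 2\,c(M)-3,
\]
so $c(M)\ge k.$ Combined with the upper bound, this forces $c(M)=k.$

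Finally, the identity $c(\widetilde{M})=2k-3=2\,c(M)-3$ puts $M$ into the equality case of Proposition~\ref{pro:crushing-eq}, giving a unique minimal triangulation of $M.$ Because the minimal layered triangulation of $L(4k,2k-1)$ and the twisted layered loop triangulation $\widehat{C}_k$ of $S^3/Q_{4k}$ are each triangulations with exactly $k=c(M)$ tetrahedra, they must coincide with this unique minimal triangulation. The corollary is essentially a direct combination of the two main propositions with the known existence of the named triangulations, so I do not foresee a serious obstacle beyond the auxiliary verification $c(M)\ge 2,$ which is needed to legitimately invoke Proposition~\ref{pro:crushing-intro} and is handled by the short list of small-complexity manifolds just cited.
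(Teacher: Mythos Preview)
Your argument is correct and follows the paper's strategy: bound $c(M)$ above by the explicit $k$-tetrahedron triangulations, bound it below via the double cover $L(2k,1)$ together with Proposition~\ref{pro:crushing-intro} and $c(L(2k,1))=2k-3$, then invoke the equality case Proposition~\ref{pro:crushing-eq} for uniqueness. The paper organises the lower bound slightly differently---it applies Proposition~\ref{pro:crushing} directly (which needs $c(M)\ge 4$) and handles $k=2,3,4$ by census inspection---whereas you use Proposition~\ref{pro:crushing-intro} (needing only $c(M)\ge 2$) for a uniform treatment; the content is the same. One small slip: in this paper's convention (complexity $=$ minimum number of tetrahedra) $\R P^3$ and $L(3,1)$ each have complexity $2$, so the list of manifolds with $c\le 1$ is just $\{S^3,\,L(4,1),\,L(5,2)\}$; this does not affect your conclusion that $c(M)\ge 2$.
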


This implies that for every positive integer $k,$ there is a closed, orientable, connected, irreducible 3--manifold of complexity $k.$ Since $S^3,$ $L(4,1)$ and $L(5,2)$ have complexity one, we in fact have:

\begin{corollary}\label{cor:amusing}
For every positive integer $k,$ there are at least two spherical 3--manifolds of complexity $k.$
\end{corollary}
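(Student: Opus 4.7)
The plan is to split into the cases $k=1$ and $k\ge 2$, with almost all of the work already done in Corollary \ref{cor:minimal triangulations}.

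For $k\ge 2$, Corollary \ref{cor:minimal triangulations} directly supplies two candidates of complexity $k$, namely the lens space $L(4k,2k-1)$ and the generalised quaternionic space $S^3/Q_{4k}$. Both are spherical by construction: the first is a quotient of $S^3$ by a cyclic group of order $4k$, and the second is a quotient of $S^3$ by the generalised quaternion group $Q_{4k}$ of order $4k$, both acting freely by isometries. To see that these two manifolds are genuinely distinct, I would compare their fundamental groups: $\pi_1(L(4k,2k-1))\cong \Z/4k$ is abelian, whereas $\pi_1(S^3/Q_{4k})\cong Q_{4k}$ is non-abelian for $k\ge 2$. Hence the two complexity-$k$ manifolds produced by Corollary \ref{cor:minimal triangulations} are indeed two spherical manifolds, as required.

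For the base case $k=1$, I would invoke the three manifolds explicitly named in the paragraph preceding the corollary: $S^3$, $L(4,1)$ and $L(5,2)$, each known to have complexity one. All three are quotients of $S^3$ by a finite cyclic group (of orders $1$, $4$ and $5$), so all three are spherical; and their fundamental groups have distinct orders, so they are pairwise non-homeomorphic. Any two of them witness the case $k=1$.

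The only potential obstacle is checking distinctness, but once one lists the fundamental groups this is immediate, and no further geometric input is required beyond Corollary \ref{cor:minimal triangulations} and the classical identification of the complexity-one manifolds. In particular, no separate argument about the geometry or Seifert structure of $S^3/Q_{4k}$ is needed, since it is defined as a spherical space form.
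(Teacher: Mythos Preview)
Your proposal is correct and follows exactly the route the paper intends: the corollary is stated without proof, as an immediate consequence of Corollary~\ref{cor:minimal triangulations} for $k\ge 2$ together with the complexity-one examples $S^3$, $L(4,1)$, $L(5,2)$ named in the preceding sentence. Your explicit check that $L(4k,2k-1)$ and $S^3/Q_{4k}$ are non-homeomorphic (via abelian versus non-abelian fundamental group) is a detail the paper leaves implicit, but it is the natural verification and requires nothing beyond what is already recorded in the paper's description of $Q_{4k}$.
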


The first author is partially supported by NSF Grant DMS-0505609 and the Grayce B. Kerr Foundation. The second and third authors are partially supported under the Australian Research Council's Discovery funding scheme (project number DP0664276).



\section{Lifting and crushing}
\label{sec:layered triangulations}

We use the same notation as in \cite{JRT} for triangulations and for the standard models of low degree edges in minimal triangulations.

\begin{lemma}\label{lem:edges}
Suppose that the minimal triangulation $\tri$ of the closed, orientable, connected and irreducible 3--manifold $M$ is lifted to a triangulation $\widetilde{\tri}$ of a connected double cover. Assume $c(M)\ge 4.$ Then every edge which connects the two distinct vertices in $\widetilde{\tri}$ and which is contained in at most three distinct tetrahedra is contained in precisely three tetrahedra. Moreover, its image in $\tri$ is modelled on the edge of degree four in the complex $X^1_{4;3}.$
\end{lemma}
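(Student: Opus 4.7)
The plan is to translate the assumption on $\tilde{e}$ into constraints on its image $e=p(\tilde{e})$ in $\tri$, and then to apply the classification of low degree edges in one-vertex minimal triangulations from \cite{JRT} to pin down the local structure around $e$.

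The first step is a lifting bookkeeping. Let $\tau$ denote the non-trivial deck involution of $\widetilde{M}\to M$; it swaps the two vertices $\tilde{v}_1,\tilde{v}_2$ of $\widetilde{\tri}$. Every edge of $\tri$ (necessarily a loop at the unique vertex) lifts in exactly one of two ways: to a pair of edges swapped by $\tau$ (case~(A)), or to a single edge fixed setwise by $\tau$ and reversed by it (case~(B)). Since $\tilde{e}$ connects the two distinct vertices, both cases are a priori possible. For a tetrahedron $\sigma$ of $\tri$ incident to $e$ with multiplicity $d_\sigma$, its two lifts $\tilde{\sigma}^a,\tilde{\sigma}^b=\tau\tilde{\sigma}^a$ contribute edges to $p^{-1}(e)$. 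In case~(A) the $d_\sigma$ edge-incidences split between $\tilde{e}_1$ and $\tilde{e}_2$ in a pattern compatible with $\tau$, so $\deg(\tilde{e}_1)=\deg(e)$; in case~(B) all $2d_\sigma$ edge-incidences lie on the single lift $\tilde{e}$, and hence the number of distinct tetrahedra incident to $\tilde{e}$ is exactly twice the corresponding count for $e$.

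Combining this bookkeeping with the hypothesis that $\tilde{e}$ sits in at most three distinct tetrahedra, parity in case~(B) forces the count to be exactly two, so that $e$ is incident to a single tetrahedron of $\tri$; in case~(A) the analogous count forces $e$ to be incident to at most three tetrahedra and strongly constrains how each $d_\sigma$ splits. I would then walk through the resulting short list of local configurations for $e$ and use the minimality of $\tri$, the assumption $c(M)\ge 4$, and the classification from \cite{JRT} to exclude all but one. Edges of degree one or two admit reducing moves contradicting the minimality of $\tri$; the case~(B) configuration, in which a single tetrahedron supports the entire star of $e$, is similarly reducible; and among the remaining degree three and degree four configurations in case~(A), any split pattern of type $(1,1)$ on a multiplicity two incidence would yield more than three distinct tetrahedra at $\tilde{e}$, contradicting the hypothesis. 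The only surviving possibility is case~(A) with exactly three distinct tetrahedra at $e$, exactly one of multiplicity two on $e$, with that multiplicity split as $(2,0)$; reassembling this structure in $\tri$ gives a degree four edge with three incident tetrahedra, one of them self-glued along a pair of edges identified with $e$---precisely the model $X^1_{4;3}$.

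The main obstacle is this final case analysis, in which each low-incidence scenario at $\tilde{e}$ must be traced through the lifting bookkeeping back to a concrete local configuration at $e$ and then ruled out using the normal surface and combinatorial tools for detecting reducible low degree edges developed in \cite{JRT}. Once that is complete, the correspondence between the surviving split pattern and the model $X^1_{4;3}$ is essentially a direct reading of the definitions.
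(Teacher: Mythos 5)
Your overall strategy---push the hypothesis down to constraints on the image edge $e$ ($\varphi[e]=1$ for the $\Z_2$--homomorphism of the cover, and $e$ contained in at most three distinct tetrahedra) and then invoke the classification of low degree edges from \cite{JRT}---is the same skeleton as the paper's proof, and your bookkeeping that in the relevant case the preimage of $e$ is two edges swapped by the deck involution, each of the same degree as $e$, is correct. However, there is a genuine gap: your ``resulting short list'' of local configurations is not justified and is too short. The hypothesis bounds only the number of \emph{distinct} tetrahedra containing $\tilde{e}$ (hence containing $e$), not the degree of $e$ or of $\tilde{e}$; since a single tetrahedron may have several of its edges identified to $e$, the edge $e$ can a priori have degree well above four while still lying in at most three distinct tetrahedra. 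Your proposal jumps from excluding degree one and two to analysing ``degree three and degree four configurations,'' silently discarding (i) the degree five models of \cite{JRT} compatible with all your constraints (in the paper these are $X^1_{5;3}$ and $X^2_{5;3}$, excluded only by showing that in those specific models $t(\tilde{e})>3$, and likewise $X^2_{4;2}$ needs its own lift computation), and (ii) the entire case $d(e)\ge 6$, which lies outside the \cite{JRT} classification and requires a separate argument: one must first bound the number of edges of a single tetrahedron mapping to $e$ by four (using $0$--efficiency, i.e.\ no cone or dunce hat faces), enumerate the possible configurations of $p^{-1}(e)\cap\sigma$, use $\varphi$ to kill the configuration with three pairwise incompatible edges ($1+1+1\neq 0$ in $\Z_2$), and then observe that the configurations with ``mixed'' lifts force both lifts of that tetrahedron to meet $\tilde{e}$, driving $t(\tilde{e})$ above three. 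None of this appears in your proposal, and your blanket claim that any $(1,1)$ split of a multiplicity two incidence contradicts the hypothesis is only valid once three distinct tetrahedra at $e$ are already in hand; with two distinct tetrahedra it does not by itself give more than three tetrahedra at $\tilde{e}$.

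Two smaller points. Your case (B)---a single lift of $e$ fixed setwise and reversed by the deck involution $\tau$---cannot occur at all: $\tau$ is a free involution, and an edge mapped to itself with reversed orientation would have its midpoint fixed. The correct dichotomy is $\varphi[e]=0$ (two loop lifts) versus $\varphi[e]=1$ (two lifts joining the vertices, swapped by $\tau$), and only the latter is relevant here; so your case (B) is vacuous rather than ``reducible,'' and the parity argument you sketch for it is not needed. Finally, whether a multiplicity two incidence splits as $(2,0)$ or $(1,1)$ is not a free choice but is dictated by the $\varphi$--values of the remaining edges of that tetrahedron; the paper records exactly this for $X^1_{4;3}$ by listing the two admissible $\Z_2$--labellings, and an analogous check is what eliminates the degree five models you omitted.
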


\begin{figure}[t]
\psfrag{e}{{\small $e$}}
\begin{center}
      \includegraphics[height=2.7cm]{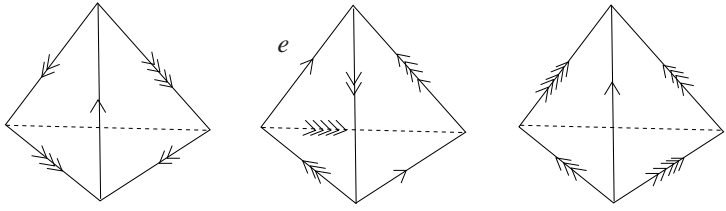}
\end{center}
    \caption{The complex $X^1_{4;3}\cong$ solid torus}
         \label{fig:X31}
\end{figure}

\begin{proof}
Since $c(M) \ge 4,$ we have $M\neq S^3, \R P^3, L(3,1)$ and $\tri$ is also $0$--efficient and has a single vertex. Hence $\widetilde{\tri}$ has precisely two vertices. Denote $\tilde{e}$ an edge in $\widetilde{\tri}$ with distinct endpoints. Suppose $\tilde{e}$ is contained in at most three tetrahedra. Then the same is true for its image $e$ in $\tri.$ Moreover, there is a non-trivial homomorphism $\varphi\co \pi_1(M)\to \Z_2$ associated with the covering, and $\varphi[e]=1.$

First note that if the degree of $e$ is at most five, then inspection of the possibilities stated in \cite{JRT} ---keeping in mind that $\varphi[e]=1,$ $c(M)\ge 4$ and $e$ is incident with at most three tetrahedra--- yields the possibilities $X^2_{4;2},$ $X^1_{4;3},$ $X^1_{5;3}$ and $X^2_{5;3}.$ Recall that $t(\tilde{e})$ denotes the number of tetrahedra incident with $\tilde{e}.$ The last two possibilities force $t(\tilde{e})>3,$ a contradiction. In case it is modelled on $X^2_{4;2},$ one observes that $\tilde{e}$ is of degree five and contained in precisely four tetrahedra in $\widetilde{\tri};$ a contradiction. This leaves the complex $X^1_{4;3}$ shown in Figure \ref{fig:X31} in this case. Either $\varphi[e] = \varphi[e_2] = \varphi[e_5]=1$ and $\varphi[e_3]=\varphi[e_4]=0$ or $\varphi[e] = \varphi[e_3] = \varphi[e_4]=1$ and $\varphi[e_2]=\varphi[e_5]=0,$ where the subscript corresponds to the number of arrows.

\begin{figure}[t]
\begin{center}
    \subfigure[Type 1]{
      \includegraphics[height=2.3cm]{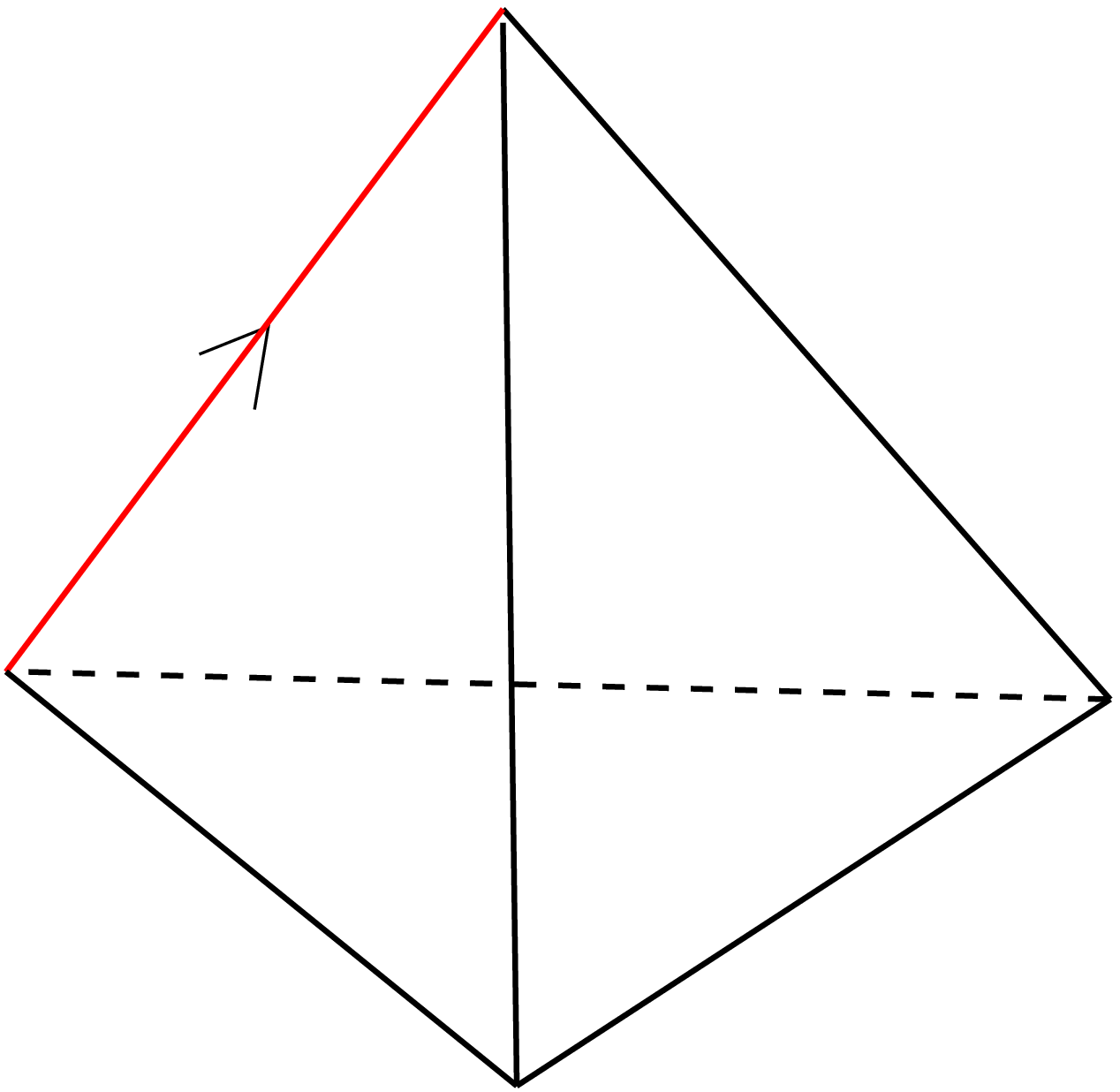}
    } 
    \qquad\qquad
    \subfigure[Type 2a]{
      \includegraphics[height=2.3cm]{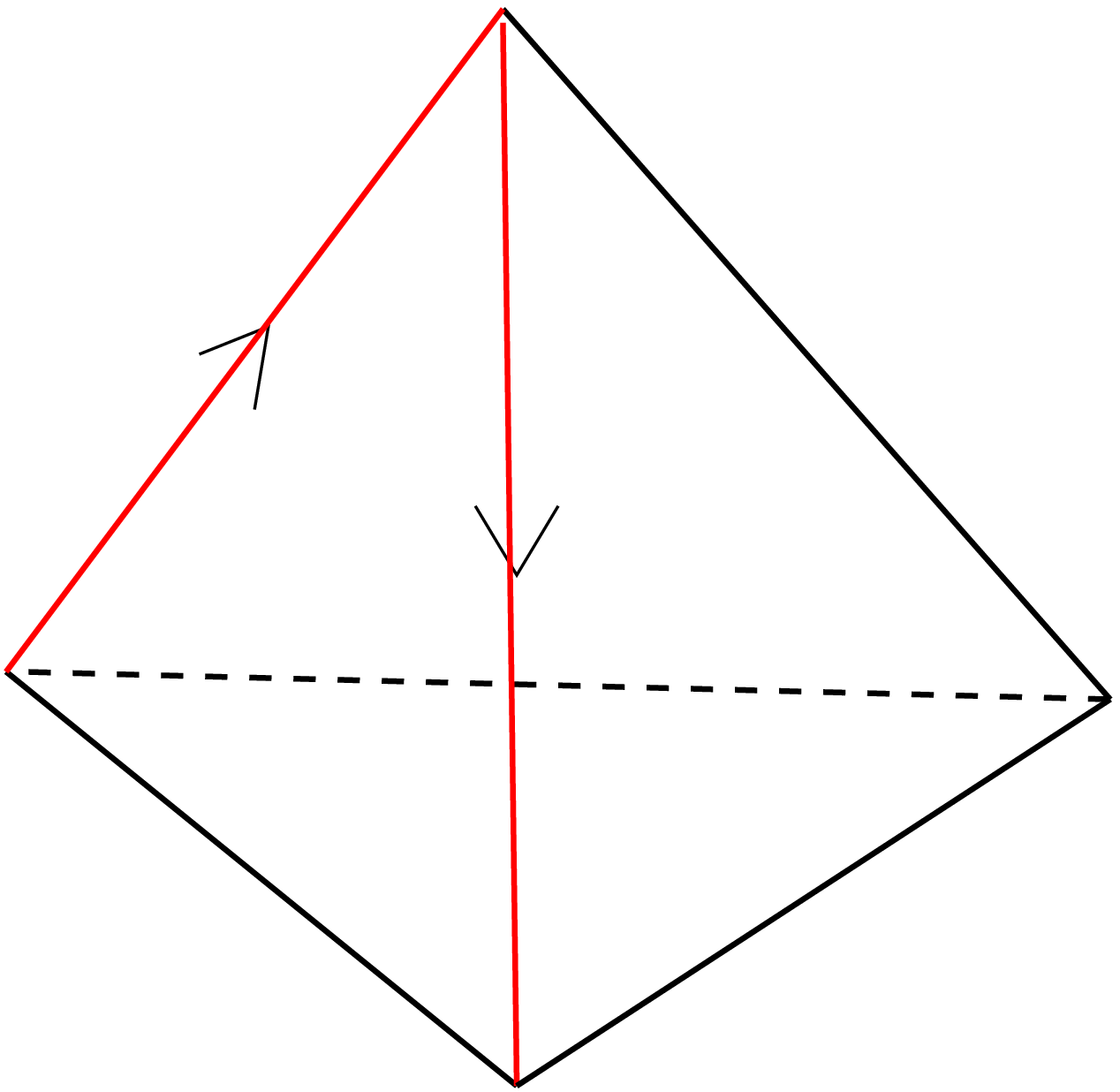}
    } 
    \qquad\qquad
    \subfigure[Type 2b]{
      \includegraphics[height=2.3cm]{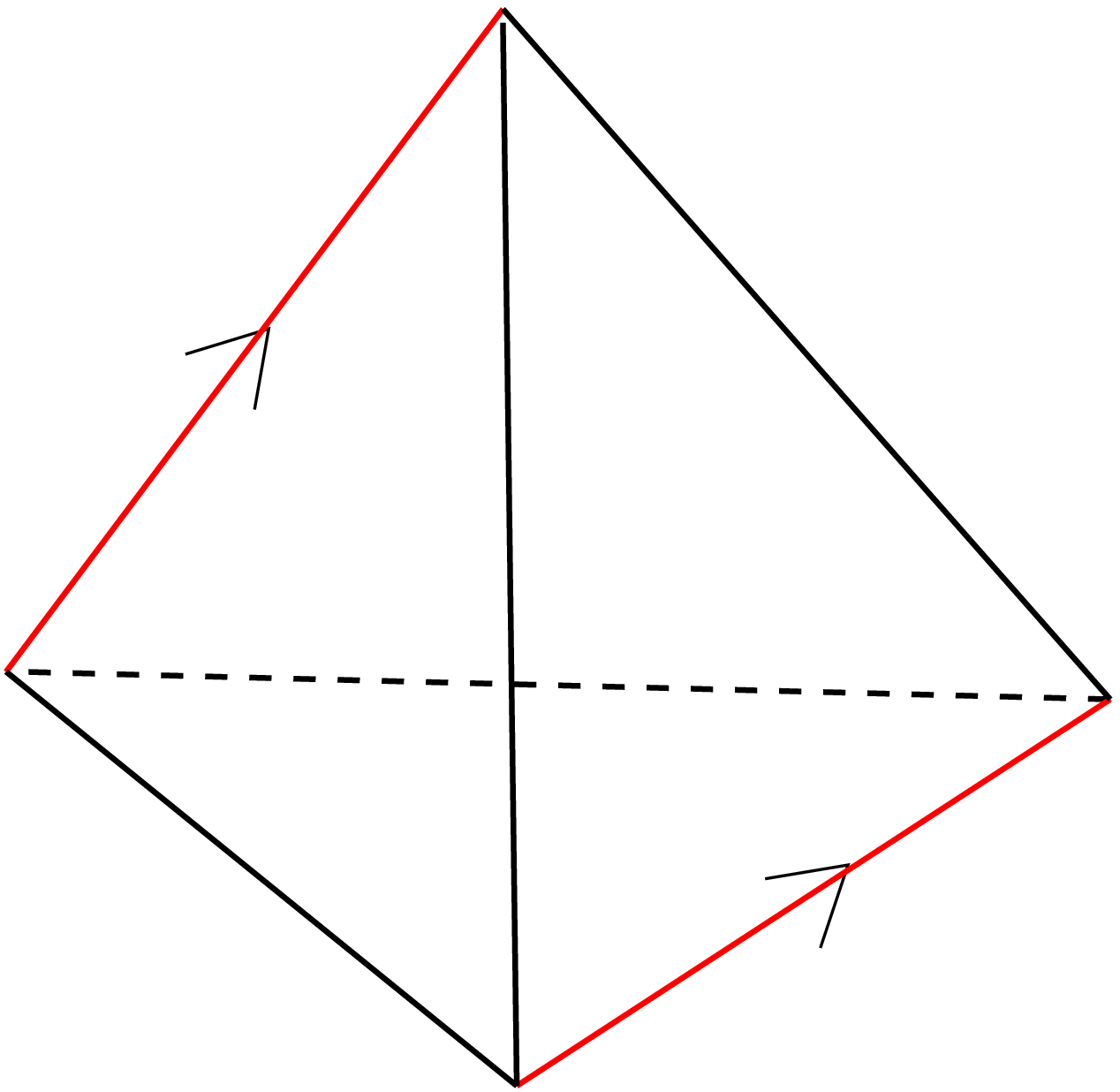}
    } 
    \qquad\qquad
    \subfigure[Type 3a]{
      \includegraphics[height=2.3cm]{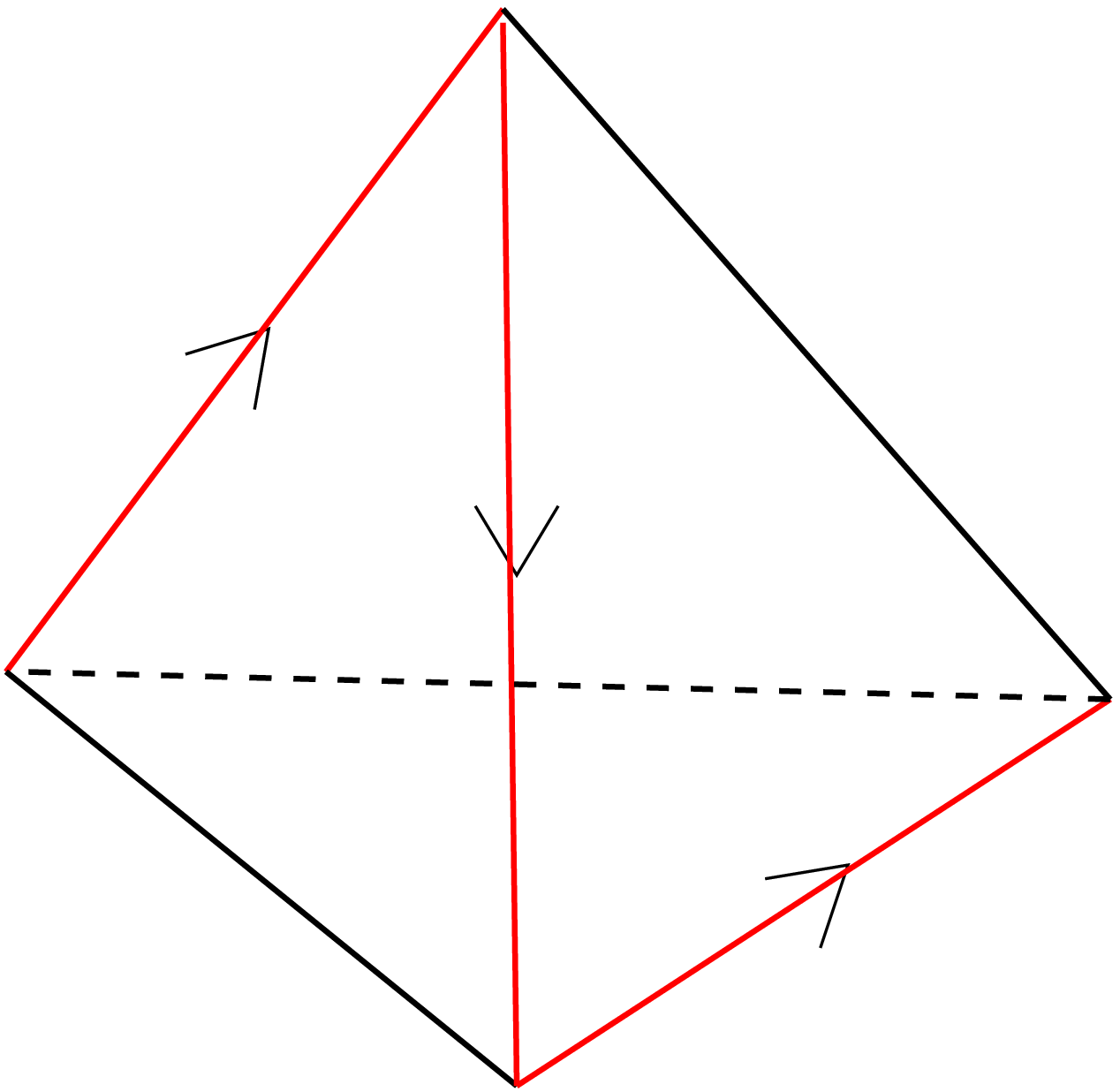}
    } 
    \qquad\qquad
    \subfigure[Type 3b]{
      \includegraphics[height=2.3cm]{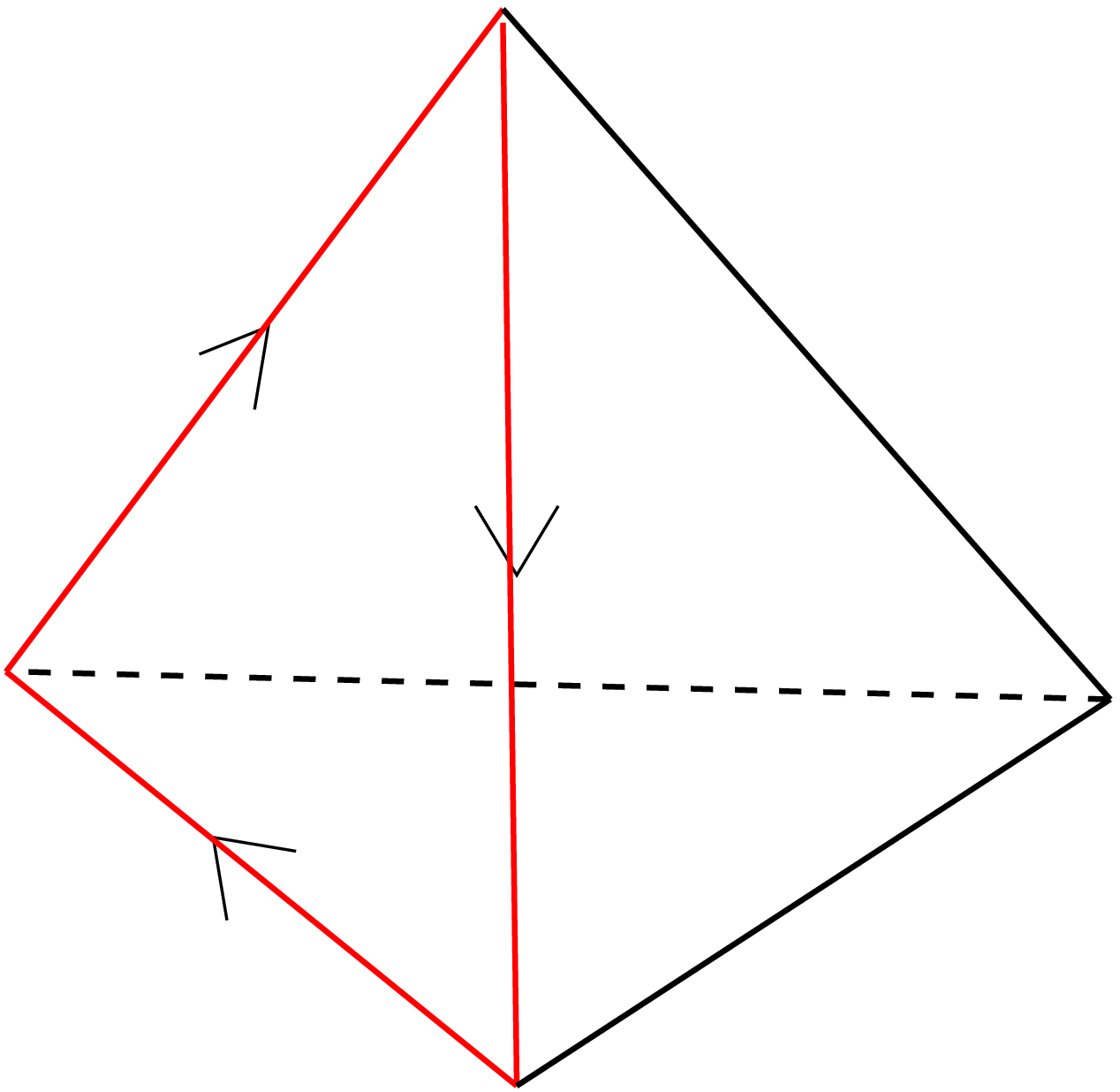}
    } 
    \qquad\qquad
    \subfigure[Type 4]{
      \includegraphics[height=2.3cm]{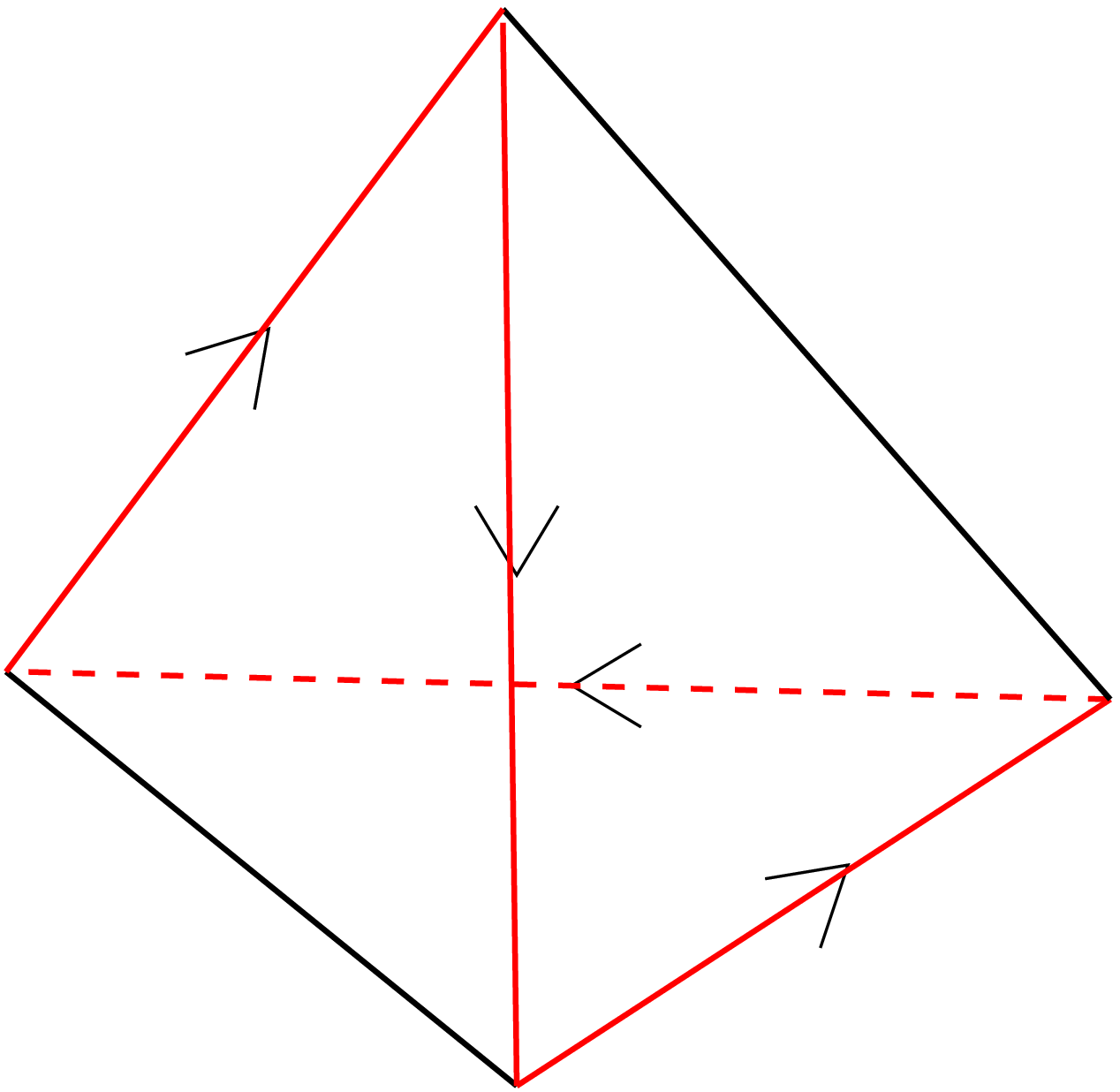}
    } 
\end{center}
    \caption{The possibilities for $p^{-1}(e) \cap \sigma$}
     \label{fig:edges}
\end{figure}

It remains to analyse the possibilities when $d(e)\ge 6.$ We make some preliminary observations that limit the number of cases to consider. Since $M\neq S^3$ and $\tri$ is 0--efficient, it follows that no face in $\tri$ is a cone \cite{JR} or a dunce hat \cite{JRT}. In particular, if $\sigma$ is a 3--simplex in $\widetilde{\Delta},$ then $p^{-1}(e) \cap \sigma$ consists of at most four edges, and the possibilities (up to combinatorial equivalence) are shown in Figure \ref{fig:edges}. Note that type 3b is not possible, since $1+1+1 \neq 0$ in $\Z_2.$ For each tetrahedron of type 2a, 3a or 4, we have that each of its two lifts to $\widetilde{\tri}$ is incident with $\tilde{e}.$ For each of the others, at least one of the lifts is incident with $\tilde{e}.$

Since $c(M)\ge 4,$ at least one face of the tetrahedra incident with $e$ does not have $e$ as an edge. So at least one of the tetrahedra is of type 1 or 2a. Since $d(e)\ge 6,$ the only case with two tetrahedra is (2a,4); this is not possible as the faces incident with $e$ cannot be matched in pairs. In case there are three tetrahedra, no tetrahedron can be of type 2a, 3a or 4 as otherwise $t(\tilde{e})>3.$ This leaves no possibility when $d(e)\ge 6.$
\end{proof}

If $M$ has complexity $2$ or $3$ and $\widetilde{M}$ is a connected double cover of $M,$ then the inequality $c(\widetilde{M}) \le 2 c(M)-3$ holds by inspection of the census of \cite{bab}. To streamline notation, we will list the possibilities as $\{\widetilde{M}, c(\widetilde{M}); M, c(M)\}.$ They are: $\{S^3, 1; \R P^3, 2\},$ $\{L(3,1), 2; L(6,1), 3\},$ $\{L(5,1), 2; L(10,3), 3\}$ and the cases $k=2,3$ in the families $\{L(2k,1), 2k-3; L(4k,2k-1), k\}$ and $\{L(2k,1), 2k-3; S^3/Q_{4k}, k\}.$

Proposition \ref{pro:crushing-intro} is implied by this discussion and the following.

\begin{proposition}\label{pro:crushing}
Suppose that the minimal triangulation $\tri$ of the closed, orientable, connected and irreducible 3--manifold $M$ is lifted to a triangulation $\widetilde{\tri}$ of a connected double cover, and that $c(M) \ge 4.$

Then every edge connecting the two distinct vertices in $\widetilde{\tri}$ is contained in at least three distinct tetrahedra and can be crushed. In particular, if $\tilde{e}$ is such an edge, then
$$c(\widetilde{M}) \le 2 c(M) -t(\tilde{e}) \le 2 c(M)-3,$$
where $t(\tilde{e})$ is the number of tetrahedra incident with $\tilde{e}.$
\end{proposition}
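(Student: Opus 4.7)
The plan is to combine Lemma \ref{lem:edges} with a direct analysis of the edge crushing operation applied to $\tilde{e}.$

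The lower bound $t(\tilde{e}) \ge 3$ is immediate from Lemma \ref{lem:edges}: that lemma asserts that any edge joining the two distinct vertices of $\widetilde{\tri}$ with $t(\tilde{e}) \le 3$ necessarily has $t(\tilde{e}) = 3$ exactly, so the cases $t(\tilde{e}) = 1$ or $2$ are excluded. This proves the first assertion of the proposition.

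For the crushing, recall that $\tilde{e}$ has distinct endpoints $u, v,$ and that the link of $\tilde{e}$ in the 3--manifold triangulation $\widetilde{\tri}$ is a circle combinatorially realised as an $n$--gon with $n = t(\tilde{e}).$ Consequently, the closed star of $\tilde{e}$ is a $3$--ball. Crushing $\tilde{e}$ then consists of removing the $t(\tilde{e})$ tetrahedra incident to $\tilde{e}$ and, for each such tetrahedron $T$ with opposite edge $ab$ (so the two faces of $T$ not containing $\tilde{e}$ are $uab$ and $vab$), identifying $uab$ with $vab$ by the identity on $\{a,b\}$ and sending $u\mapsto v.$ This models the collapse of $\tilde{e}$ to a point and merges $u, v$ into a single vertex. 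Since $\abs{\widetilde{\tri}} = 2\abs{\tri} = 2c(M),$ the result is a pseudo--simplicial complex $\widetilde{\tri}^*$ with one vertex, $2c(M) - t(\tilde{e})$ tetrahedra, and underlying space $\widetilde{M}.$ Combined with $t(\tilde{e}) \ge 3,$ this gives
\[
c(\widetilde{M}) \le \abs{\widetilde{\tri}^*} = 2c(M) - t(\tilde{e}) \le 2c(M) - 3.
\]

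The main technical obstacle is verifying that $\widetilde{\tri}^*$ really is a pseudo--simplicial triangulation of $\widetilde{M},$ rather than a degenerate or non--manifold complex: one must rule out pathological pairings of the external faces around $\tilde{e}$ that would collapse two tetrahedra onto each other or create a new cone or dunce hat. When $t(\tilde{e}) = 3,$ this reduces to a direct check using the explicit lift of the local model $X^1_{4;3}$ shown in Figure \ref{fig:X31}, combined with the $\Z_2$--cohomology constraints $\varphi[e]=1$ identified in the proof of Lemma \ref{lem:edges}. When $t(\tilde{e}) \ge 4,$ the verification is more combinatorial; the key input is that $\tri$ is $0$--efficient, so that no face of $\tri$ (and hence of $\widetilde{\tri}$) is a cone or a dunce hat, which sharply limits the ways in which the external faces of tetrahedra meeting $\tilde{e}$ can be paired in $\widetilde{\tri}.$
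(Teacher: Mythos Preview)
Your reduction to Lemma~\ref{lem:edges} for the bound $t(\tilde{e})\ge 3$ is correct and matches the paper. The description of the crushing operation is also fine in outline. However, the heart of the proposition is precisely the ``main technical obstacle'' you flag, and here your argument is incomplete.

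You split into the cases $t(\tilde{e})=3$ and $t(\tilde{e})\ge 4$, but in neither case do you actually carry out the verification: for $t(\tilde{e})=3$ you say it ``reduces to a direct check'' on the lift of $X^1_{4;3}$, and for $t(\tilde{e})\ge 4$ you only say that $0$--efficiency ``sharply limits'' the possible pairings. Neither of these is a proof that no inadmissible gluing occurs. The paper's proof avoids this case split entirely by exploiting a feature you do not use: the two-vertex structure of $\widetilde{\tri}$. If $f$ is any face containing $\tilde{e}$, then since $\tilde{e}$ runs from $v$ to $v'$ and the third vertex of $f$ is either $v$ or $v'$, the remaining two edges of $f$ necessarily have \emph{different} endpoint types --- one is a loop at $v$ (or $v'$) and the other joins $v$ to $v'$. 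Hence they can never be identified with each other in $\widetilde{\tri}$, regardless of $t(\tilde{e})$. The only remaining bad case is that one of them is identified with $\tilde{e}$ itself, and \emph{that} is where $0$--efficiency enters: such an identification would make the image of $f$ in $\tri$ a cone or dunce hat, forcing $M=S^3$.

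So the missing idea is the vertex-parity argument on the edges of a face through $\tilde{e}$; once you have it, the proof is uniform in $t(\tilde{e})$ and no casework or explicit model is needed. (A minor side remark: the link of $\tilde{e}$ is a $d(\tilde{e})$--gon, not a $t(\tilde{e})$--gon, and the closed star need not be an embedded ball in a pseudo-simplicial triangulation; but these issues become irrelevant once the admissibility of the crushing is established as above.)
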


\begin{proof}
Since $c(M) \ge 4,$ $\tri$ is $0$--efficient with a single vertex, and hence $\widetilde{\tri}$ has precisely two vertices. Let $\tilde{e}$ be an edge in $\widetilde{\tri}$ with distinct endpoints. It follows from Lemma \ref{lem:edges} that $t(\tilde{e})\ge 3.$ As in \cite{JR}, we can crush $\tilde{e}$ and the surrounding tetrahedra to form a one vertex triangulation triangulation of $\widetilde{M}$, so long as there are no inadmissible gluings on the boundary of this set of tetrahedra. That this is always the case follows from the fact that the two ends of $\tilde{e}$ are the two vertices, $v$ and  $v^\prime,$ of the triangulation. So any edge $\tilde{e}^\prime$ in a face $f$ containing $\tilde{e}$ cannot be glued to the third edge $\tilde{e}^*$ of $f$, since if $\tilde{e}^\prime$ has ends at $v,v$ then $\tilde{e}^*$  has ends at $v, v^\prime$ and vice versa. Moreover, neither $\tilde{e}^\prime$ nor $\tilde{e}^*$ can be glued to $\tilde{e}$ since otherwise the image of $f$ in $\tri$ is a cone or dunce hat which implies $M = S^3$ since $\tri$ is 0--efficient (see \cite{JR} Corollary~5.4 and \cite{JRT} Lemma~7). Hence $\tilde{e}$ can be crushed and we have $c(\widetilde{M}) \le 2 c(M) -t(\tilde{e}) \le 2 c(M)-3.$
\end{proof}


\begin{proof}[Proof of Proposition \ref{pro:crushing-eq}]

Suppose the hypothesis of the proposition is satisfied. If $c(M)\le 3,$ the statement follows from the discussion preceding Proposition \ref{pro:crushing}. Hence assume $c(M)\ge 4$ and choose a minimal triangulation, $\tri,$ of $M.$ Then every edge connecting the two vertices, $v$ and $v',$ of $\widetilde{\tri}$ can be crushed. Since we stipulate equality, it follows that every such edge is contained in precisely three distinct tetrahedra. Hence its image under the covering map is contained in at most three distinct tetrahedra in $\tri.$ Denote $e$ the image in $\tri$ of an edge in $\widetilde{\tri}$ connecting the two vertices.
It follows from Lemma \ref{lem:edges} that $e$ is of degree four and its neighbourhood is modelled on $X^1_{4;3}.$ Note that each of its lifts, $\tilde{e}_i,$ to $\widetilde{\tri}$ also has its neighbourhood modelled on $X^1_{4;3}.$ Moreover, each tetrahedron incident with $\tilde{e}_i$ has precisely one edge with both ends at $v$ and one with both ends at $v'.$ Since each tetrahedron also contains other edges than $\tilde{e}_i$ connecting the two vertices, this propagates and one observes that in $\widetilde{\tri}$ there is precisely one edge with both ends at $v$ and precisely one edge with both ends at $v'.$ Moreover, there is a normal surface, $S,$ made up entirely of quadrilateral discs which is the boundary of a neighbourhood of each of these edges. Since $\widetilde{M}$ is orientable, this implies that $S$ is a torus. (Alternatively, observe that $S$ is separating and has vanishing Euler characteristic since all vertices in the cell decomposition of $S$ by quadrilateral discs have degree four.) Whence $\widetilde{M}$ is a lens space. Moreover, $\tri$ contains a quadrilateral surface which is double covered by the torus and dual to the $Z_2$--cohomology class. It hence is a Klein bottle and incompressible. In particular, the triangulation $\tri$ is dual to a 1--sided Heegaard diagram.

The regular neighbourhood of the Klein bottle is homeomorphic to the twisted $I$--bundle over the Klein bottle, and its boundary is hence a torus. As in \cite{Rubin1979}, choose generators $a,b$ for the Klein bottle such that $a, b^2$ correspond to standard generators for the boundary torus. Then $M$ is obtained by attaching a solid torus with meridian disc corresponding to the curve $b^{2m}a^n$ for some $m$ and $n.$ The dual triangulation has precisely one tetrahedron for each intersection point of the boundary of the meridian disc. The minimal number of such points is $mn$ and there is a unique curve up to isotopy which realises this. Hence $\tri$ is the unique minimal triangulation of $M.$

Moreover, the cover $\widetilde{M}$ can now be identified as $L(2mn,x),$ where $x=1-2np=-1-2mq,$ where $(p,q)$ are chosen such that $pn-qm=1.$ Note that $\widetilde{M}$ is not uniquely determined by this. However, we know by assumption that $c(L(2mn,x))=2mn-3.$ This forces $x=1,$ since otherwise $c(L(2mn,x))<2mn-3$ as can be seen from the number of tetrahedra in the minimal layered triangulation of $L(2mn,x),$ see \cite{JR:LT}. But then $n=1$ or $m=1,$ which gives the conclusion of the proposition.
\end{proof}

\begin{proof}[Proof of Corollary \ref{cor:minimal triangulations}]

It is shown in the above proof that for every $k \ge 2,$ $S^3/Q_{4k}$ and $L(4k, 2k-1)$ have a 1--sided Heegaard diagram with precisely $k$ intersection points, and hence a triangulation having precisely $k$ tetrahedra. The statement of the corollary holds for $k=2, 3, 4$ by inspection of the census in \cite{bab}. Hence assume $k \ge 5.$ Then, by inspection of the census, we have $c(S^3/Q_{4k})\ge 4$ and $c(L(4k, 2k-1))\ge 4.$

Suppose a minimal triangulation of $L(4k, 2k-1)$ or $S^3/Q_{4k}$ has at most $k-1$ tetrahedra. Lifting to the double covering $L(2k,1)$, we get the triangulation $\widetilde{\tri}$ with two vertices and at most $2k-2$ tetrahedra. Proposition \ref{pro:crushing} implies that any edge connecting the two vertices can be crushed and must belong to at least three tetrahedra. So crushing such an edge gives a one-vertex triangulation of $L(2k,1)$ with at most $2k-5$ tetrahedra, giving a contradiction to the fact that $L(2k,1)$ has complexity $2k-3.$ Hence both $L(4k, 2k-1)$ and $S^3/Q_{4k}$ have complexity $k$ and the manifolds satisfy the hypothesis of Proposition \ref{pro:crushing-eq}.

The unique minimal triangulations have been described via the dual 1--sided Heegaard diagram; the alternative descriptions stated in Corollary \ref{cor:minimal triangulations} are given in the next subsections.
\end{proof}


\subsection{The minimal layered triangulation of $L(4k,2k-1)$}

For the lens space $M=L(4k,2k-1),$ the minimal layered triangulation, $\tri,$ is obtained from the minimal layered extension of $\{ 2, 2k-1, 2k+1\}$ by folding along $2,$ see~\cite{JR:LT} for details. The sequence of labelings of the layered triangulation is:
$$(2,1,1), (3,2,1), (5,3,2), (7,5,2),\break  (9,7,2), \dots, (2k+1,2k-1,2).$$
The minimal layered triangulation of $L(4k,2k-1)$ has therefore $k$ tetrahedra, and hence is the unique minimal triangulation.


\subsection{The twisted layered loop triangulation}

Note that 
$$M_k = S^3/Q_{4k} = S^2(\ (2,1), (2,1), (k,1-k) \ ) = S^2(\ (1,-1), (2,1),(2,1),(k,1)\ ),$$
the latter being the unique normal form. Moreover, 
$$ \pi_1(M_k) = Q_{4k} \cong \langle x, y \ |\ xyx^{-1} = y^{-1}, x^2 = y^k \rangle.$$
Element $x$ has order $4,$ $y$ has order $2k.$ The subgroup $\langle y \rangle$ has index two, hence it is normal, and $Q_{4k}$ has order $4k.$ It follows that  $H_1(M_k) \cong \Z_4$ if $k$ is odd, and $H_1(M_k) \cong \Z_2 \oplus \Z_2$ if $k$ is even. The double cover of $M_k$ associated to the action of $\langle y \rangle \cong \Z_{2k}$ on $S^3$ is a lens space; in fact $S^3/\langle y \rangle=L(2k,1).$ 

\begin{figure}[t]
\psfrag{i}{{\small $e_2$}}
\psfrag{h}{{\small $e_1$}}
\psfrag{b}{{\small $b$}}
\psfrag{t}{{\small $t$}}
\psfrag{e}{{\small $e_h$}}
\psfrag{f}{{\small $e_{h+1}$}}
\psfrag{g}{{\small $e_{h+2}$}}
\begin{center}
      \includegraphics[height=3.5cm]{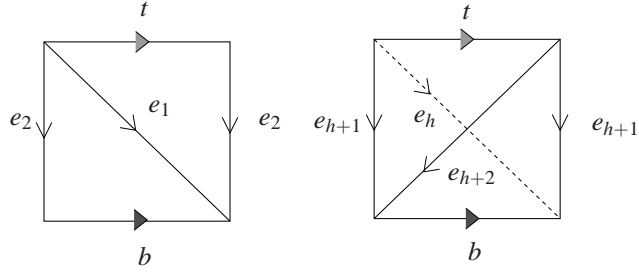}
\end{center}
    \caption{The twisted layered loop triangulation}
     \label{fig:quaternionic}
\end{figure}

The starting point for the twisted layered loop triangulation is the triangulation with two faces of the annulus shown with labelling in Figure \ref{fig:quaternionic}. The edges corresponding to the two boundary components are denoted $t$ for \emph{top} and $b$ for \emph{bottom}, and oriented so that they correspond to the same element in fundamental group. The remaining two edges are $e_1$ and $e_2,$ oriented from $t$ to $b.$ Tetrahedron $\sigma_1$ is layered along $e_1,$ and the new edge denoted $e_3$ and oriented from $t$ to $b.$ The annulus is thus identified with two faces of $\sigma_1.$ Inductively, tetrahedron $\sigma_h$ is layered along edge $e_h,$ and the new edge $e_{h+2}$ is oriented from $t$ to $b.$ Assume $k$ tetrahedra have thus been attached; if $k=0$ we have an annulus, if $k=1$ a creased solid torus and if $k \ge 2$ a solid torus. Denote the resulting triangulation $C_k.$

Then the two free faces of tetrahedron $\sigma_k$ in $C_k$ are identified with the two free faces of tetrahedron $\sigma_1$ such that $\sigma_k$ is layered along $e_1$ with $e_1 \leftrightarrow -e_{k+1},$ $e_2 \leftrightarrow -e_{k+2}$ and $t\leftrightarrow-b.$ The result is a closed 3--manifold, denoted $M_k,$ and the triangulation, denoted $\widehat{C}_k$ is termed its \emph{twisted layered loop triangulation}.

The following result can be found in Burton's thesis (\cite{bab}, Theorem 3.3.11).

\begin{proposition}[Burton]
For each $k\ge 1,$ 
$$M_k = S^3/Q_{4k} = S^2(\ (2,1), (2,1), (k,1-k) \ ).$$
\end{proposition}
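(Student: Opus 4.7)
The plan is to identify the closed 3-manifold $M_k$ triangulated by $\widehat{C}_k$ by exhibiting an explicit Seifert fibration with the claimed invariants, and then to verify via a fundamental group computation. First, I would observe that the layered chain $C_k$ is a solid torus carrying a natural ``almost Seifert'' structure: arcs from the top vertex $v$ to the bottom vertex $v'$ sweep out the fiber direction, with the core of $C_k$ being an exceptional fiber. Tracking the layering combinatorially, each $\sigma_h$ acts on the boundary triangulation by an elementary move analogous to a step in a Farey/continued fraction expansion, so that after $k$ layerings the meridian of $C_k$ meets a preferred section on $\partial C_k$ with slope $(k, 1-k)$ relative to the fiber direction.

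Next, I would analyse the twisted closure. The identification $t \leftrightarrow -b$ together with $e_1 \leftrightarrow -e_{k+1}$ and $e_2 \leftrightarrow -e_{k+2}$ swaps the two boundary circles of the initial annulus with orientation reversal, producing not a lens space (which the untwisted closure would give) but a $\Z_2$ quotient of the boundary of $C_k$. This quotient realises a regular neighbourhood of a Klein bottle inside $M_k$, which Seifert fibers over a disk with two exceptional fibers of type $(2,1)$, as is standard for the twisted $I$-bundle over the Klein bottle. Gluing this neighbourhood to $C_k$ along their common torus boundary assembles a Seifert fibration of $M_k$ over $S^2$ with three exceptional fibers of types $(2,1)$, $(2,1)$, and $(k,1-k)$.

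To confirm the homeomorphism type, I would compute $\pi_1(M_k)$ directly from the cell structure of $\widehat{C}_k$: using $t, b, e_1, \dots, e_{k+2}$ as generators and the tetrahedral face identifications together with the closing relations as relators, systematic Tietze manipulation collapses the presentation to $\langle x, y \mid xyx^{-1} = y^{-1},\; x^2 = y^k \rangle \cong Q_{4k}$. Since closed small Seifert fibered spaces are classified up to homeomorphism by their Seifert invariants in normal form, and since the orientable spherical space form with fundamental group $Q_{4k}$ is $S^3/Q_{4k}$, this verifies $M_k = S^3/Q_{4k} = S^2(\ (2,1), (2,1), (k,1-k)\ )$.

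The main obstacle is the bookkeeping required to extract the precise Seifert invariant $(k, 1-k)$ from the inductive layering, rather than some related form such as $(k,1)$; this requires careful control of orientations on the $e_h$, of the choice of section on $\partial C_k$, and of how the closing identification acts on that section. Once this is pinned down, the two $(2,1)$ fibers follow transparently from the symmetry of the twisted closure and the standard Seifert structure on the twisted $I$-bundle over the Klein bottle.
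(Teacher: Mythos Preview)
Your approach is correct in outline and shares with the paper the key structural observation: $M_k$ contains a one-sided Klein bottle whose complement is a solid torus, so $M_k$ admits a genus-two one-sided Heegaard splitting. The routes diverge in how this is exploited. The paper locates the Klein bottle as the normal surface $S_1$ built from one quadrilateral per tetrahedron (separating $t$ from $b$), observes that the complementary solid torus has core $t=-b$, and then \emph{cites} Rubinstein's classification of such manifolds to obtain $M_k=S^3/Q_{4k}$ and Orlik to read off the Seifert data. You instead take the Klein bottle to be the image of $\partial C_k$ under the twisted closing (an annulus with its two boundary circles identified with a flip), treat $C_k$ itself as the complementary solid torus, and \emph{compute} the Seifert invariants directly from the layering combinatorics together with the standard fibration on the twisted $I$-bundle over the Klein bottle, backing this up with a Tietze reduction of $\pi_1$. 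Your method is more self-contained and makes the Seifert structure explicit, at the cost of the slope bookkeeping you flag; the paper's argument is shorter but leans on two external references. One small point: your phrasing ``gluing this neighbourhood to $C_k$'' is slightly off, since no extra piece is attached---the Klein bottle neighbourhood is a collar of $\partial C_k$ inside $C_k$ after the boundary identification---but this does not affect the argument.
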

\begin{proof} 
Place a quadrilateral in each tetrahedron separating edges $t$ and $b.$ This gives a one-sided Klein bottle, $S_1,$ in $M_k,$ and $\overline{M_k \setminus S_1}$ is a solid torus with core $t=-b.$ We thus have a one-sided Heegaard splitting of non-orientable genus two. Work in \cite{Rubin1979} by the second author identifies such manifolds using the meridian of $\overline{M_k \setminus S_1},$ giving $M_k = S^3/Q_{4k}.$ It is shown by Orlik \cite{Or1972} that $S^3/Q_{4k} = S^2(\ (2,1), (2,1), (k,1-k) \ ).$ 
\end{proof}



\address{Department of Mathematics, Oklahoma State University, Stillwater, OK 74078-1058, USA}
\email{jaco@math.okstate.edu}

\address{Department of Mathematics and Statistics, The University of Melbourne, VIC 3010, Australia} 
\email{rubin@ms.unimelb.edu.au} 

\address{Department of Mathematics and Statistics, The University of Melbourne, VIC 3010, Australia} 
\email{tillmann@ms.unimelb.edu.au} 
\Addresses
                                                      

\begin{thebibliography}{99}

\bibitem{bab} Benjamin A.\thinspace Burton: \emph{Minimal triangulations and normal surfaces}, PhD thesis, The University of Melbourne, 2003.


\bibitem{JR} William Jaco and J.\thinspace Hyam Rubinstein: \emph{0--efficient triangulations of 3--manifolds}, Journal of Differential Geometry {\bf 65} (2003), no. 1, 61--168.

\bibitem{JR:LT} William Jaco and J.\thinspace Hyam Rubinstein: \emph{Layered-triangulations of 3--manifolds}, arXiv:math/0603601.

\bibitem{JRT} William Jaco, J.\thinspace Hyam Rubinstein and Stephan Tillmann: \emph{Minimal triangulations for an infinite family of lens spaces}, Journal of Topology, in press.


\bibitem{Mat1990} Sergei V.\thinspace Matveev: \emph{Complexity theory of three-dimensional manifolds}, Acta Appl. Math.  19  (1990),  no. 2, 101--130.

\bibitem{Or1972} Peter Orlik: \emph{Seifert manifolds,} Lecture Notes in Mathematics, no. 291, Springer-Verlag, Berlin, 1972.


\bibitem{Rubin1979} J.\thinspace Hyam Rubinstein: \emph{On 3-manifolds that have finite fundamental group and contain Klein bottles}, Trans. Amer. Math. Soc. 251 (1979), 129--137.




\end{thebibliography}
\end{document}